%updated 28/05/2019  (was 31-03-2014)
\documentclass[12pt]{article}

\usepackage{amssymb}
\usepackage{amsfonts}
\usepackage{amsthm}
\usepackage{latexsym}
\usepackage[english]{babel}

\newcommand{\m}[1]{\marginpar{\tiny{#1}}}
\newcommand{\cA}{\mathcal A}
\newcommand{\cB}{\mathcal B}
\newcommand{\cC}{\mathcal C}
\newcommand{\cF}{\mathcal F}

\newcommand{\cT}{\mathcal T}

\newcommand{\pr}{{\mathbb P}}
\newcommand{\E}{{\mathbb E}}

\newcommand{\N}{\mathbb{N}}

\newcommand{\frag}{\rm frag}

\newtheorem{theorem}{Theorem}

\newtheorem{conjecture}[theorem]{Conjecture}

\begin{document}

\title{Connectivity for an unlabelled bridge-addable graph class}
\author{Colin McDiarmid\\ Oxford University}
%\date{13 March 2020}

\maketitle

Call a class $\cA$ of graphs \emph{bridge-addable} if, whenever a graph $G$ in $\cA$ has vertices $u$ and $v$ in distinct components, then the graph $G+uv$ (obtained by adding the edge $uv$) is also in $\cA$.
Thus for example the class $\cF$ of forests is bridge-addable, as is the class of planar graphs.

Let $\cA_n$ denote the set of graphs in $\cA$ on vertex set $\{1,\ldots,n\}$ (so these are labelled graphs), and similarly for other graph classes.
Also, let us use the notation $R_n \in_u \cA$ to indicate that the random graph $R_n$ is sampled uniformly from $\cA_n$ (and assume implicitly that $\cA_n$ is non-empty).  Thus
%\[ \pr(R_n \mbox{ is connected}) = \frac{|\cC_n|}{|\cA_n|}\]
\[ \pr(R_n \mbox{ is connected}) = \, |\cC_n|\, / \, |\cA_n|\]
  where $\cC$ is the class of connected graphs in~$\cA$.
 The following elementary but useful inequality appeared in McDiarmid, Steger and Welsh~\cite{msw05,msw06}.
  If $\cA$ is bridge-addable and $R_n \in_u \cA$ then %for each $n$ (assuming $\cA_n$ is non-empty)
\begin{equation} \label{eqn1}
  \pr(R_n \mbox{ is connected}) \geq e^{-1} \approx 0.3679 \;\; \mbox{ for each } n.
\end{equation}

If $\cT$ denotes the class of trees then $|\cT_n|/|\cF_n| \to e^{-\frac12}$ as $n \to \infty$, by a result of R\'enyi~\cite{renyi59} in 1959.
It was conjectured in~\cite{msw06} in 2006 that the class of forests is the `least connected' bridge-addable class; and in particular that,
if $\cA$ is bridge-addable and $R_n \in_u \cA$, then
\begin{equation} \label{conj1}
 \liminf_n   \pr(R_n \mbox{ is connected}) \geq e^{-\frac12}  \approx  0.6065 .
\end{equation}
After several partial results towards proving this conjecture, see~\cite{amr2012,bbg2008,bbg2010,kp2013,norin2013},
the full result was proved by Guillaume Chapuy and Guillem Perarnau~\cite{cp2019}.  See also~\cite{cmcd2013,cmcd2016,mw2014,mw2017}.
\medskip

Here we are interested in {\bf unlabelled} graphs, where by contrast little seems to be known.
Let $\tilde{\cA}_n$ be the set of unlabelled graphs in $\cA_n$, and similarly for other graph classes.  We may identify an unlabelled graph as an isomorphism class of labelled graphs.  We use the notation $\tilde{R}_n \in_u \tilde{\cA}$ to indicate that the random unlabelled graph $\tilde{R}_n$ is sampled uniformly from $\tilde{\cA}_n$.  Of course, as before
%\[ \pr(\tilde{R}_n \mbox{ is connected}) = \frac{|\tilde{\cC}_n|}{|\tilde{\cA}_n|}. \]
\[ \pr(\tilde{R}_n \mbox{ is connected}) = \, |\tilde{\cC}_n| \, / \,|\tilde{\cA}_n|. \]
We make two conjectures about the probability of being connected, corresponding to~(\ref{eqn1}) and~(\ref{conj1}) above. The first seems to ask for rather little (though see Theorem~\ref{thm.uconn} below).
%\medskip

%\noindent {\bf Conjecture 1} \,
\begin{conjecture} \label{conj.1}
  There is a $\delta>0$ such that, if the graph class $\cA$ is bridge-addable and $\tilde{R}_n \in_u \tilde{\cA}$, then
% for each $n$ (assuming $\cA_n$ is non-empty)
\begin{equation} \label{eqn3}
 \pr(\tilde{R}_n \mbox{ is connected}) \geq \delta \;\; \mbox{ for each } n.
\end{equation}
\end{conjecture}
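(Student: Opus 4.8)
The plan is to transport the labelled bound (\ref{eqn1}) to unlabelled graphs via orbit counting. A direct ``unlabelled bridge-swap'' fails: a highly symmetric disconnected graph yields very few distinct graphs when one adds a bridge, so its out-degree in the add/remove-a-bridge bipartite graph cannot be bounded below, and the telescoping that gives $e^{-1}$ in the labelled case collapses. Instead I would use Burnside's lemma. Writing $\mathrm{Fix}_{\cA}(\sigma)=\{G\in\cA_n:\sigma G=G\}$ and $\mathrm{Fix}_{\cC}(\sigma)$ for its connected members,
\[ |\tilde{\cA}_n|=\frac1{n!}\sum_{\sigma\in S_n}|\mathrm{Fix}_{\cA}(\sigma)|, \qquad |\tilde{\cC}_n|=\frac1{n!}\sum_{\sigma\in S_n}|\mathrm{Fix}_{\cC}(\sigma)|, \]
so it suffices to find an absolute $\delta>0$ with $|\mathrm{Fix}_{\cC}(\sigma)|\ge\delta\,|\mathrm{Fix}_{\cA}(\sigma)|$ for every $\sigma$; summing over $\sigma$ then gives (\ref{eqn3}). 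The term $\sigma=\mathrm{id}$ is the labelled problem itself, already handled by (\ref{eqn1}), so the whole difficulty lies in the other terms.

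The set $\mathrm{Fix}_{\cA}(\sigma)$ is not bridge-addable in the literal sense — one cross-component edge breaks $\sigma$-invariance — but it is closed under adding a whole $\sigma$-orbit of cross-component edges, inserted one edge at a time so that bridge-addability applies at each step. I would prove an equivariant version of (\ref{eqn1}): in a family of $\sigma$-invariant graphs closed under adding orbits of bridges, a uniform fraction are connected. The engine is again a double count, with $\sigma$-orbits of edges in the role of single edges: map each pair $(G,O)$, with $G$ disconnected and $O$ an addable orbit, to $(G+O,O)$, and bound the image inside the pairs $(H,O)$ where $O$ is a set of bridges of a connected $H$ (of which there are at most $n-1$). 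The favourable point is that a $\sigma$-invariant disconnected graph has its components permuted by $\sigma$, so they occur in $\sigma$-orbits; this forces many components and hence many addable orbits — exactly the surplus needed to compensate for such graphs being rare as unlabelled objects.

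The main obstacle is to make this per-$\sigma$ inequality correct and, above all, uniform in $\sigma$. A single orbit may merge several components simultaneously, so the clean ``each bridge drops the component count by one'' accounting of the labelled proof no longer holds, and the lower bound on addable orbits must be balanced against the at most $n-1$ removable ones in a way that yields an absolute constant simultaneously for permutations as different as one $n$-cycle and a product of $n/2$ transpositions. One must also check the inequality is never vacuous: when $\mathrm{Fix}_{\cA}(\sigma)\ne\emptyset$ it contains a connected graph, since any $\sigma$-invariant graph can be connected up by adding orbits of bridges inside $\cA$. Should a single constant resist all $\sigma$ at once, a natural fallback is to establish it for the permutations carrying the bulk of $\sum_\sigma|\mathrm{Fix}_{\cA}(\sigma)|$ — those with few short cycles — and to bound the remaining contribution crudely; but arranging such a split without any hypothesis on $\cA$ is itself the heart of the difficulty.
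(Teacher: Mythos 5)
You are attempting what the paper itself leaves open: this statement is Conjecture~1, and the paper proves only the much weaker Theorem~\ref{thm.uconn}, namely $|\widetilde{\cA}_n| \le 2\,|\widetilde{\cC}^{\bullet}_n| \le 2n\,|\widetilde{\cC}_n|$ (so $\delta$ replaced by $1/2n$), via an explicit encoding: pick a vertex $v$ of minimum bridge-degree $b\in\{0,1\}$ in a smallest component, join $v$ to every other component, root at $v$, and record $b$; component sizes then identify the added edges. So no complete proof should be expected here, and indeed your proposal is a programme rather than a proof — you explicitly leave the per-$\sigma$ inequality and its uniformity unestablished.

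More seriously, the Burnside reduction fails term by term at exactly the step you flagged and waved through. Take $\cA=\cF$ (forests), $n=4$, $\sigma=(1\,3\,2\,4)$. The perfect matching with edges $12$ and $34$ is $\sigma$-invariant, so $\mathrm{Fix}_{\cA}(\sigma)\neq\emptyset$; but no tree on $4$ vertices is $\sigma$-invariant (the star's automorphisms fix its centre while $\sigma$ is fixed-point-free; the path's automorphism group has order $2$ while $\sigma$ has order $4$). Hence $\mathrm{Fix}_{\cC}(\sigma)=\emptyset$ and the inequality $|\mathrm{Fix}_{\cC}(\sigma)|\ge\delta\,|\mathrm{Fix}_{\cA}(\sigma)|$ fails for every $\delta>0$: your ``never vacuous'' check is false. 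The mechanism is the one your closure claim overlooks: the $\sigma$-orbit of the cross edge $13$ is $\{13,32,24,41\}$, and after adding $13$ the graph is already connected, so $32$ joins two vertices in the \emph{same} component — it is not a bridge, and bridge-addability gives no closure. In general, whenever $\sigma$ permutes components in an orbit of length greater than one, later edges of an edge-orbit become chords, so $\mathrm{Fix}_{\cA}(\sigma)$ is not closed under adding orbits and the equivariant version of~(\ref{eqn1}) has no engine. Any Burnside route must therefore rely on averaging across $\sigma$ — showing the bad permutations contribute negligibly to $\sum_\sigma |\mathrm{Fix}_{\cA}(\sigma)|$ — with no hypothesis on $\cA$ to lean on; as you yourself note, that is the heart of the difficulty, and it remains open.
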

For trees and forests we have
\[ |\tilde{\cT}_n|/|\tilde{\cF}_n| \to \tau \approx e^{-0.5226} \approx  0.5930 \;\; \mbox{ as } n \to \infty,\]
see for example the first line of table 3 in~\cite{km}.
The second conjecture 
%corresponds to~(\ref{conj1}) above, and
is more speculative.
Call the class $\cA$ of graphs \emph{decomposable} when a graph is in the class if and only if each component is.  Examples include the class $\cF$ of forests and the class of planar graphs.
%\medskip

%\noindent {\bf Conjecture 2} \,
\begin{conjecture} \label{conj.1}
  If the graph class $\cA$ is  bridge-addable and decomposable, and $\tilde{R}_n \in_u \tilde{\cA}$, then
\begin{equation} \label{eqn2}
 \liminf_n \pr(\tilde{R}_n \mbox{ is connected}) \geq \tau.
\end{equation}
\end{conjecture}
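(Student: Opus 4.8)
The plan is to use decomposability to pass to unlabelled ordinary generating functions, where the symmetry issues that make the unlabelled setting awkward are absorbed automatically. Write $c_n=|\tilde{\cC}_n|$ and $a_n=|\tilde{\cA}_n|$, with $C(x)=\sum_n c_n x^n$ and $A(x)=\sum_n a_n x^n$. Since $\cA$ is decomposable, an unlabelled graph in $\tilde{\cA}$ is exactly a finite multiset of unlabelled connected graphs from $\tilde{\cC}$, so the multiset (Euler) transform gives
\[ A(x)=\prod_{m\ge 1}(1-x^m)^{-|\tilde{\cC}_m|}=\exp\Big(\sum_{k\ge 1}\tfrac1k\,C(x^k)\Big). \]
This identity is exact for unlabelled counts, so no passage between labelled and unlabelled graphs, and hence no bookkeeping of automorphism groups, is needed; this is where decomposability does the heavy lifting. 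Then $\pr(\tilde R_n\mbox{ is connected})=c_n/a_n$, and the whole problem reduces to comparing the coefficient asymptotics of $C$ with those of its multiset transform.

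Next I would extract the structural content of bridge-addability. Provided $\cA_1\neq\emptyset$ we have $K_1\in\cC$, and closing under adding bridges between components forces $\cC$ to contain every tree; hence $c_n\ge|\tilde{\cT}_n|$ for all $n$ and the radius of convergence $\rho_C$ of $C$ satisfies $\rho_C\le\rho$, where $\rho$ is the Otter radius of $T(x):=\sum_n|\tilde{\cT}_n|x^n$. The second, and crucial, consequence is a rigidity statement: if $\cC$ properly contains $\cT$, i.e.\ $\cC$ has a connected member with a cycle, then $\rho_C<\rho$ \emph{strictly}. The idea is that bridge-addability lets one attach pendant trees and bridge in copies of any fixed cyclic member $U$ freely; writing the functional equation for the rooted version of such graphs, the extra ``gadget'' term is strictly positive and shifts the critical point of the smooth implicit-function problem downwards. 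Establishing this cleanly and uniformly over all admissible $U$ and $\cC$ is already nontrivial, but it is essential: were $\rho_C=\rho$ possible with $\cC\supsetneq\cT$ the conjecture would in fact fail.

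The analytic engine is a giant-component estimate for the multiset transform. In the principal case, where $C$ has a unique dominant singularity of standard algebraic type at $\rho_C$ with $C(\rho_C)<\infty$, a uniform member of $\tilde{\cA}_n$ has one component of size $n-O(1)$ and a bounded remainder; the decomposition $a_n\approx\sum_{j\ge 0}c_{n-j}\,a_j$ together with $c_{n-j}\sim\rho_C^{\,j}c_n$ gives $a_n\sim A(\rho_C)\,c_n$, whence
\[ \pr(\tilde R_n\mbox{ is connected})\ \longrightarrow\ \frac{1}{A(\rho_C)}=\exp\Big(-\sum_{k\ge 1}\tfrac1k\,C(\rho_C^{\,k})\Big). \]
For the forest class this recovers $\tau=1/F(\rho)$ with $F(x)=\sum_n|\tilde{\cF}_n|x^n$, so the conjectured bound becomes the extremal inequality $A(\rho_C)\le F(\rho)$; that is, among all bridge-addable decomposable classes, forests should \emph{maximise} $A(\rho_C)$, equivalently minimise the connectivity probability.

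The main obstacle is proving this extremal inequality. Enriching $\cC$ beyond $\cT$ simultaneously increases $C$ coefficientwise and, by the rigidity above, decreases $\rho_C$; these two effects pull $A(\rho_C)=\exp(\sum_k C(\rho_C^{\,k})/k)$ in opposite directions, and one must show the net effect is always a decrease. I expect this to need a variational argument anchored at $\cC=\cT$, where $\rho_C=\rho$ is the critical point and the rooted tree series equals $1$ there, showing that the first-order gain from inserting a cyclic gadget is always dominated by the first-order loss from the shrinking radius. A secondary obstacle is the case analysis for non-generic singular behaviour: when $C(\rho_C)=\infty$ (a very mild polynomial correction) the giant-component picture, and hence the displayed formula, must be replaced, and when $C$ has several singularities of equal modulus the ratio oscillates. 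This last point is exactly why the statement is phrased with $\liminf$, and why only a lower bound rather than a genuine limit can be expected in general.
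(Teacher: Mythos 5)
This statement is Conjecture~2 of the paper, which is explicitly presented as open and ``more speculative''; the paper contains no proof of it, nor of any asymptotic statement of this kind. The only thing proved in the paper is Theorem~\ref{thm.uconn}, the much weaker bound $|\widetilde{\cA}_n| \leq 2n\,|\widetilde{\cC}_n|$, and that is obtained by an elementary encoding argument (attach the small components to a carefully chosen vertex and show the construction is invertible), not by generating functions. So there is no paper proof to match, and your text is, by your own account, a research programme rather than a proof: the step you call ``the main obstacle'', the extremal inequality $A(\rho_C)\le F(\rho)$, is exactly the content of the conjecture in your reformulation, and it is left unproved. That alone is a genuine gap.

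Beyond the admitted gap, two of your structural steps fail as stated. First, bridge-addability does \emph{not} force $\cC\supseteq\cT$: a bridge-addable class is closed under adding bridges, not under deleting vertices or edges, so nothing guarantees $K_1\in\cA$ (with decomposability, $K_1\in\cA$ would indeed yield all forests, but that is an extra hypothesis not in the conjecture). The paper's own ``awkward example'' --- graphs whose bridgeless pieces are cycles of length at least $k$ --- is bridge-addable and decomposable yet contains no tree at all, so your anchoring of the variational argument at $\cC=\cT$, and the rigidity claim $\rho_C<\rho$ for $\cC\supsetneq\cT$, do not even apply to all admissible classes. Second, your analytic engine presumes $\rho_C>0$ together with a unique dominant algebraic singularity and $C(\rho_C)<\infty$; but the class of \emph{all} graphs is bridge-addable and decomposable, $|\widetilde{\cC}_n|$ then grows superexponentially, $\rho_C=0$, and the formula $\pr(\tilde R_n \mbox{ is connected})\to 1/A(\rho_C)$ is meaningless, even though the conjecture covers this class (where in fact the probability tends to $1$). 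Since bridge-addable classes carry no smoothness, subcriticality, or even monotonicity assumptions, restricting to ``the principal case'' discards most of the conjecture's content, and the $\liminf$ phrasing does not rescue the radius-zero or $C(\rho_C)=\infty$ regimes. In short: the framework is a reasonable heuristic for why $\tau$ should be the extremal constant, but none of the three pillars (trees inside $\cC$, strict radius drop, transfer asymptotics $a_n\sim A(\rho_C)\,c_n$) holds at the stated level of generality, and the statement remains open, as the paper says.
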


The {\em fragment size} $\mbox{frag}(G)$ of a graph $G$ is the number of vertices of $G$ less the maximum number of vertices in a component.
% (the number of vertices missing from a largest component).
In the labelled case, if the graph class $\cA$ is bridge-addable and $R_n \in_u \cA$ then (\cite{cmcd2013}, see~\cite{rgmc} for an earlier version)
\[ \E[ \mbox{frag}(R_n)] <2.\]
Is there a corresponding result in the unlabelled case?   Here is an awkward example.

Fix an integer $k \geq 3$, and let $\cA$ consist of all graphs $G$ such that deleting any bridges from $G$ yields a disjoint union of cycles each of length at least $k$.  Clearly $\cA$ is bridge-addable and decomposable.  Let $n=2k$, and let $\tilde{R}_n \in_u \tilde{\cA}$.  Then $\pr(\tilde{R}_n \mbox{ is connected})= \frac12$, which is satisfactory (though less than $\tau$); but $\E[\mbox{frag}(\tilde{R}_n)]=n/4$, which is bad!
%\smallskip

%\noindent {\bf Conjecture 3} \,
\begin{conjecture} \label{conj.3}
For each graph class $\cA$ which is bridge-addable and decomposable, there is a constant $c_{\cA}$ such that, for each positive integer $n$, if $\tilde{R}_n \in_u \tilde{\cA}$ then 
\[ \E[ \frag(\tilde{R}_n)] \leq c_{\cA}.\]
\end{conjecture}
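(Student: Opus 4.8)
The plan is to exploit decomposability to recast the problem as one about random \emph{multisets}, and then to use bridge-addability to force a giant component that carries all but boundedly many vertices. First I would use decomposability to identify $\tilde{\cA}$ with the multiset class over the connected members $\tilde{\cC}$: an unlabelled graph in $\tilde{\cA}_N$ is exactly a multiset of unlabelled connected graphs from $\tilde{\cC}$ whose orders sum to $N$. Writing $c_n=|\tilde{\cC}_n|$ and $C(x)=\sum_n c_n x^n$, P\'olya's multiset formula gives the ordinary generating function $A(x)=\prod_{n\ge1}(1-x^n)^{-c_n}=\exp\big(\sum_{k\ge1}C(x^k)/k\big)$, with $[x^N]A(x)=|\tilde{\cA}_N|$. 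Since $\E[\frag(\tilde R_N)]$ is trivially finite for each fixed $N$, it suffices to bound $\limsup_{N\to\infty}\E[\frag(\tilde R_N)]$, i.e.\ to control the asymptotic component-size profile of a uniform random multiset of total order $N$.

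The next step is to extract the one structural consequence of bridge-addability we can use cleanly. Let $c_n^{\bullet}$ denote the number of \emph{vertex-rooted} connected unlabelled graphs of order $n$ (so $c_n^\bullet=\sum_{\tilde C\in\tilde{\cC}_n}o(\tilde C)$, where $o(\tilde C)$ is the number of vertex-orbits of the automorphism group). Bridging sets up a bijection between pairs of rooted connected graphs and connected graphs with a distinguished bridge-orbit: joining the two roots by an edge lands in $\tilde{\cC}_{a+b}$ by bridge-addability, and forgetting the bridge is at most $(a+b-1)$-to-one since a connected graph on $a+b$ vertices has at most $a+b-1$ bridges. This yields $c_a^{\bullet}c_b^{\bullet}\le (a+b)\,c_{a+b}$ (with the obvious variant when $a=b$), hence $c_{a+b}\ge c_a c_b/(a+b)$. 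Thus $\log c_n$ is super-additive up to an $O(\log n)$ term, so $\gamma:=\lim_n c_n^{1/n}=1/\rho$ exists, $\rho$ being the radius of convergence of $C$ (and the dominant singularity of $A$ when $\rho<1$).

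The intended mechanism for a bounded fragment is condensation. In the Gibbs/Boltzmann description, at tilt $x\in(0,\rho)$ the component multiplicities are independent geometric and the expected total order is $\mu(x)=\sum_n nc_nx^n/(1-x^n)$; if the saturated mean $\mu:=\sum_n nc_n\rho^n/(1-\rho^n)$ is finite, a single giant component absorbs the excess $N-\mu$ while the remaining components — the fragment — have total order $O(1)$ in expectation, which is exactly the bound sought. Equivalently, mimicking the labelled proof of $\E[\frag]<2$: given a graph with a unique largest component $L$, bridge into $L$ the largest fragment component $D$; the result lies in $\tilde{\cA}_N$, and the number of pairwise non-isomorphic graphs so produced is at least $o(\tilde L)$. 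When $o(\tilde L)=\Omega(N)$ this recovers the factor-$N$ gain that beats the at-most-$(N-1)$ bridges one may delete, forcing $\pr(\frag\ge j)$ to decay geometrically in $j$ and hence $\E[\frag]=O(1)$.

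The main obstacle is \textbf{automorphisms}, and it strikes on two fronts. First, the bridging inequality $c_a^{\bullet}c_b^{\bullet}\le(a+b)c_{a+b}$ is too weak to force $\mu<\infty$: it is consistent with $c_n\sim\gamma^n n^{-\alpha}$ for every $\alpha>1$, whereas condensation needs $\alpha>2$, so this crude use of bridge-addability does not by itself exclude the ``spread-mass'' regime. Second, and more fundamentally, the efficiency of bridging is governed by the \emph{symmetry} of the components: bridging two vertex-transitive graphs (for instance cycles) produces only one unlabelled graph, killing the factor-$N$ gain — this is precisely what makes the cycle class awkward and produces a large fragment at the single order $N=2k$. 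Overcoming this requires showing that for large $N$ the giant component of $\tilde R_N$ is typically \emph{asymmetric}, with $\Omega(N)$ vertex-orbits, so that bridging the fragment into it genuinely creates $\Omega(N)$ distinct graphs; establishing such typical asymmetry from bridge-addability alone, and dually ruling out slowly-decaying $c_n$, is the crux. Because highly symmetric classes must be tolerated, no universal constant can work, which is exactly why the conjectured $c_{\cA}$ is permitted to depend on $\cA$.
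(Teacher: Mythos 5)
You should note first that the statement you set out to prove is Conjecture~\ref{conj.3} of the paper: it is \emph{open}, and the paper offers no proof of it (its only theorem is the much weaker counting bound of Theorem~\ref{thm.uconn}), so the question is simply whether your argument closes the conjecture. It does not, and the gap is one you candidly name yourself. The P\'olya-multiset reduction via decomposability and the super-multiplicativity $c_{a+b}\ge c_a c_b/(a+b)$, obtained by joining two roots with a bridge (legitimate by bridge-addability) and noting a connected graph has at most $a+b-1$ bridges, are both sound; but neither of your two proposed mechanisms is then carried to completion. The condensation route requires the saturated first moment $\mu=\sum_n n c_n\rho^n/(1-\rho^n)$ to be finite, i.e.\ decay of $c_n\rho^n$ strictly faster than $n^{-2}$; as you observe, the bridging inequality is consistent with $c_n\sim\gamma^n n^{-\alpha}$ for every $\alpha>1$, so finiteness of $\mu$ is simply not established (and the hypotheses give no upper-bound control on $c_n\rho^n$ at all, so one must also contend with the possibilities $C(\rho)=\infty$ or no condensation phase whatsoever). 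The double-counting route, mimicking the labelled proof of $\E[\frag(R_n)]<2$, requires the typical largest component to have $\Omega(N)$ vertex-orbits; but the paper's own awkward example (bridged unions of cycles of length at least $k$, at $n=2k$) shows that vertex-transitive largest components genuinely occur and force $\E[\frag(\tilde R_n)]=\Theta(n)$ at particular orders, so the orbit lower bound fails at some scales and any correct proof must quantify how rarely this happens and absorb those scales into $c_{\cA}$. You state this ``typical asymmetry'' claim as the crux but give no argument for it; asserting the crux is not proving it.

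One further caution even granting asymmetry: your claim that bridging the largest fragment component $D$ into the giant $L$ produces at least $o(\tilde L)$ pairwise non-isomorphic graphs needs an injectivity argument identifying the \emph{added} bridge among all bridges of the output graph. The paper's proof of Theorem~\ref{thm.uconn} does exactly this bookkeeping for its weaker statement, by attaching at a vertex of a \emph{smallest} component and comparing the sizes $n(e)$ of the components across each candidate bridge; some analogue (e.g.\ distinguishing the added edge by the size of the side containing $D$, which requires care when $L$ itself has bridges with the same split) is needed before the orbit count translates into a count of distinct unlabelled graphs, and hence before $\pr(\frag(\tilde R_n)\ge j)$ decays geometrically. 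In summary: your proposal is a sensible research program with a correct diagnosis of the central obstacle --- automorphisms destroy the factor-$N$ gain that powers the labelled argument --- but its two load-bearing steps (finiteness of $\mu$, or typical $\Omega(N)$-orbit giants with a controlled exceptional set) remain unproved, so the conjecture stays open.
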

\smallskip

The fourth and final conjecture is a little speculative (as was the second).

%\begin{conjecture} \label{conj.4} There is a constant $c$ such that, for each graph class $\cA$ which is bridge-addable and decomposable, if $\tilde{R}_n \in_u \tilde{\cA}$ then \[ \limsup_{n \to \infty} \, \E[ \frag(\tilde{R}_n)] \leq c.\]\end{conjecture}
%
\begin{conjecture} \label{conj.4}
There is a constant $c$ such that, for each graph class $\cA$ which is bridge-addable and decomposable,
% there exists $n_0=n_0(\cA)$ such that for $n \geq n_0$ and 
for $\tilde{R}_n \in_u \tilde{\cA}$ 
%\[ \E[ \frag(\tilde{R}_n)] \leq c.\]
\[ \limsup_{n \to \infty} \, \E[ \frag(\tilde{R}_n)] \leq c.\]
\end{conjecture}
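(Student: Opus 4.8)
The plan is to use decomposability to turn the question into one about a random multiset of connected graphs, and then to read the expected fragment off the singular behaviour of a generating function. Write $c_k=|\tilde{\cC}_k|$, $\tilde C(x)=\sum_{k\ge1} c_k x^k$ and $\tilde A(x)=\sum_{n\ge0}|\tilde{\cA}_n|x^n$. Because $\cA$ is decomposable, an unlabelled graph in $\cA$ is exactly a finite multiset of unlabelled connected graphs in $\cA$, so
\[\tilde A(x)=\prod_{k\ge1}(1-x^k)^{-c_k}=\exp\Bigl(\sum_{i\ge1}\frac1i\,\tilde C(x^i)\Bigr).\]
Let $\rho\in(0,1]$ be the radius of convergence of $\tilde C$; it is also that of $\tilde A$, since the terms $\tilde C(x^i)$ with $i\ge2$ remain analytic beyond $\rho$. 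Note that $\frag(\tilde R_n)$ is the total size of the multiset after its largest part is deleted.

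First I would establish the ``one giant component'' picture and use it to compute the limit. Assuming $\tilde C(\rho)<\infty$, a uniformly random object of size $n$ should have one dominant component of size $n-O(1)$ and a bounded remainder. Splitting an object into its largest part (size $n-j$, with $c_{n-j}$ choices) and the multiset of the other parts (total size $j$, counted by $|\tilde{\cA}_j|$) suggests
\[\pr(\frag(\tilde R_n)=j)\approx\frac{c_{n-j}\,|\tilde{\cA}_j|}{|\tilde{\cA}_n|}\longrightarrow\frac{|\tilde{\cA}_j|\,\rho^{\,j}}{\tilde A(\rho)}\qquad(n\to\infty),\]
since $c_{n-j}/|\tilde{\cA}_n|\to\rho^{\,j}/\tilde A(\rho)$; in particular this recovers $\pr(\tilde R_n\mbox{ is connected})\to1/\tilde A(\rho)$. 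Summing $j$ against the limiting law gives
\[\limsup_{n\to\infty}\E[\frag(\tilde R_n)]=\frac{\rho\,\tilde A'(\rho)}{\tilde A(\rho)},\]
which is finite precisely when $\tilde A'(\rho)<\infty$, equivalently when $\tilde C'(\rho)=\sum_k k\,c_k\rho^k<\infty$, i.e.\ when the exponent $\beta$ in $c_k\sim k^{-\beta}\rho^{-k}$ exceeds $2$ (trees, with Otter's exponent $5/2$, lie safely in this regime). Turning the first displayed approximation into a theorem is a standard but not automatic task, which I would handle by transfer theorems together with a second-moment control on the number of parts of each size, or by a pointing argument on the largest component.

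The crux is to deduce the two analytic inputs from bridge-addability, uniformly in $\cA$: that $\tilde C(\rho)<\infty$ and, more delicately, $\tilde C'(\rho)<\infty$, together with a bound on $\rho\tilde A'(\rho)/\tilde A(\rho)$ by a constant not depending on $\cA$. Bridge-addability with decomposability says exactly that $\tilde{\cC}$ is closed under joining two connected members by one bridge at an arbitrary pair of vertices; repeatedly attaching a fixed smallest member as a pendant exhibits an Otter-type family of ``trees of blocks'' inside $\tilde{\cC}$, which forces $\rho<1$. The difficulty is that this controls $c_k$ only from below, whereas $\tilde C'(\rho)<\infty$ is an upper bound on the tail of $c_k\rho^k$: I must rule out the subcritical, block-dominated regime in which the two-edge-connected blocks are so plentiful that $\tilde C$ inherits a singularity exponent in $(1,2)$, forcing $\E[\frag(\tilde R_n)]\to\infty$. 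I expect the main obstacle to be a genuine structural theorem showing that the bridge-join richness always makes the top-level block-tree recursion critical before any block singularity is met, so that the universal Otter exponent prevails. Once that is in hand, the cleanest route to the universal constant $c$ is to show that forests are extremal, dominating the limiting fragment law of $\tilde{\cA}$ by that of $\tilde{\cF}$ via a bridge-addability comparison of the coefficient sequences, which would give $c=\rho_{\cT}\,\tilde F'(\rho_{\cT})/\tilde F(\rho_{\cT})$ with $1/\tilde F(\rho_{\cT})=\tau$, where $\tilde F(x)=\sum_n|\tilde{\cF}_n|x^n$ and $\rho_{\cT}$ is its radius of convergence.
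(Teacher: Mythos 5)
This statement is Conjecture~4 of the paper: it is posed as an open problem, and the paper contains no proof of it (the only proved result in the note is Theorem~\ref{thm.uconn}, a much weaker counting inequality). So there is no paper proof to compare against, and your proposal must stand on its own as a complete argument --- which it is not. You candidly flag the crux yourself: deducing $\tilde C(\rho)<\infty$ and $\tilde C'(\rho)<\infty$ from bridge-addability, uniformly over all classes $\cA$, is exactly the open structural question, and ``I expect the main obstacle to be a genuine structural theorem'' is a statement of the problem, not a solution. Note also that your intermediate claim $c_{n-j}/|\tilde{\cA}_n|\to\rho^{\,j}/\tilde A(\rho)$ is itself far stronger than anything known: it presupposes smoothness of the coefficient sequence ($c_{n-1}/c_n\to\rho$, subcriticality of the multiset composition), which bridge-addability is not known to supply, and its consequence $\pr(\tilde R_n\mbox{ connected})\to 1/\tilde A(\rho)$ would already settle Conjectures~1 and~2 of the paper, both open. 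A program whose first displayed step implies two open conjectures is a research plan, not a proof.

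Two further gaps are independently fatal as the argument stands. First, passing from convergence of the fragment law to $\limsup_n \E[\frag(\tilde R_n)]=\rho\tilde A'(\rho)/\tilde A(\rho)$ requires uniform integrability of $\frag(\tilde R_n)$, and the paper's ``awkward example'' (graphs whose bridge-deletions yield disjoint cycles of length at least $k$) shows precisely that the fragment can carry mass of order $n$ at particular values of $n$; controlling such tails is where the difficulty lives, and transfer theorems alone do not address it. Second, Conjecture~4 demands a single constant $c$ valid for \emph{every} bridge-addable decomposable class, whereas your limit $\rho\tilde A'(\rho)/\tilde A(\rho)$ depends on $\cA$ and could a priori be unbounded over the family of classes (again, the awkward example parametrised by $k$ is the kind of family you must handle uniformly). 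Your proposed remedy --- that forests are extremal, giving $c=\rho_{\cT}\tilde F'(\rho_{\cT})/\tilde F(\rho_{\cT})$ --- is the unlabelled analogue of the extremality that Chapuy and Perarnau proved in the labelled setting after a decade of partial results, and no ``bridge-addability comparison of coefficient sequences'' yielding it is known; you would need to supply that comparison, not merely name it. In short: the generating-function framework and the identification of the finiteness of $\tilde C'(\rho)$ as the key obstacle are sensible and well-aligned with how one would attack this conjecture, but every load-bearing step is either open or unjustified, so the statement remains, as in the paper, a conjecture.
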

\bigskip

Let us return to Conjecture~1, but lower our sights.  We can quickly prove a weakened version of the conjecture (with $\delta$ replaced by $1/2n$) which is still strong enough to be useful (see the proof of Theorem 4 in~\cite{ms2020}).

\begin{theorem} \label{thm.uconn}
Let $\cA$ be a bridge-addable class of graphs, let $\cC$ be the class of connected graphs in $\cA$, and let ${\cC}^{\bullet}$ be the set of vertex-rooted graphs in~${\cC}$. Then 
\begin{equation} \label{eqn.ubadd}
|\widetilde{\cA}_n | \leq 2\, |\widetilde{\cC}^{\bullet}_n | \leq  2n\, |\widetilde{\cC}_n |  \;\; \mbox{ for each } n \in \N \, .
\end{equation}
\end{theorem}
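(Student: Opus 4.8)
The plan is to establish the two inequalities in~(\ref{eqn.ubadd}) separately; the right-hand one is routine, and the left-hand one carries the content. For $|\widetilde{\cC}^{\bullet}_n| \le n\,|\widetilde{\cC}_n|$, I would observe that an unlabelled vertex-rooted connected graph amounts to an isomorphism class $[H]\in\widetilde{\cC}_n$ together with a choice of one orbit of $\mathrm{Aut}(H)$ on $V(H)$; the number of such rootings of $[H]$ is the number of vertex-orbits, which is at most $|V(H)|=n$. Summing over $\widetilde{\cC}_n$ gives the bound.

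For the left-hand inequality I would first split $\widetilde{\cA}_n = \widetilde{\cC}_n \sqcup \widetilde{\mathcal D}_n$, where $\widetilde{\mathcal D}_n$ denotes the disconnected classes. Every connected graph has at least one rooting, so $|\widetilde{\cC}_n| \le |\widetilde{\cC}^{\bullet}_n|$ trivially. Hence it is enough to produce an injection $\widetilde{\mathcal D}_n \hookrightarrow \widetilde{\cC}^{\bullet}_n$: the two parts then add to give $|\widetilde{\cA}_n| \le 2\,|\widetilde{\cC}^{\bullet}_n|$. To build such a map I would connect a disconnected $G$ by adding bridges, which is legitimate because adding an edge between two distinct components of a member of $\cA$ keeps us inside $\cA$ (this is exactly bridge-addability, the hypothesis behind~(\ref{eqn1})); completing $G$ to a connected graph therefore lands in $\cA$ and so in $\cC$. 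Concretely, fix a total order $\prec$ on isomorphism classes of connected graphs (say by number of vertices, then by a canonical code), list the components of $G$ in $\prec$-order, attach them along a canonical path joining consecutive components by a single bridge between canonically chosen vertex-orbits, and root the resulting graph $H$ at the vertex used to attach the $\prec$-smallest component.

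The work is in showing that this map has fibres of size at most $2$, equivalently that $G$ can be recovered from $(H,r)$ up to a two-fold ambiguity. Two issues arise. First, a component need not have any canonical vertex to attach to---a cycle, for instance, is vertex-transitive---so there is genuine ambiguity in the attachment points; the root pins down one such vertex, and I expect a single binary tie-break to absorb the rest, which is where the factor $2$ comes from. The second and more serious issue is that, inside $H$, a bridge I added in order to hang on a whole component is a priori indistinguishable from a bridge that already lay inside some component: in the bridge tree of $H$ both present as a pendant piece attached by one edge. The heart of the proof is thus to make the canonical completion \emph{decodable}---to use the order $\prec$ together with the rooted bridge tree of $H$ to single out exactly the added bridges and peel the components off in reverse $\prec$-order.

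I expect this decodability to be the main obstacle; by contrast the legality of adding bridges and the orbit bound of the first paragraph are straightforward. One should also note that the labelled inequality~(\ref{eqn1}) does not help directly here, since it controls the automorphism-weighted sum $\sum_{[G]} n!/|\mathrm{Aut}(G)|$ rather than the plain count $\sum_{[G]} 1$, and highly symmetric disconnected graphs can be numerous in $\widetilde{\cA}_n$ while contributing little to $|\cA_n|$; the argument must therefore be intrinsically unlabelled. A fallback, should a clean global completion prove elusive, is to argue level by level: relate classes with $k$ components to rooted classes with $k-1$ components by adding one bridge and rooting at its new endpoint, and telescope, aiming to keep the accumulated multiplicity bounded by $2$.
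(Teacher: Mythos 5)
Your first paragraph is fine: the orbit count gives $|\widetilde{\cC}^{\bullet}_n| \leq n\,|\widetilde{\cC}_n|$, and bridge-addability does legitimise any completion of a disconnected graph by added bridges. But the main inequality is not proved: you say yourself that decodability is the expected ``main obstacle'' and you only \emph{expect} a binary tie-break to absorb the ambiguity --- that expectation is precisely the content of the theorem, and nothing in the proposal discharges it. Your path construction makes it genuinely hard: in the rooted bridge tree of $H$, a bridge you added to join consecutive components is locally indistinguishable from a bridge that was already inside a component, and the order $\prec$ does not separate them, since an internal bridge can split a component into pieces whose canonical codes interleave arbitrarily with the codes of the genuine components. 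There is also an accounting slip: you ask for an \emph{injection} $\widetilde{\mathcal D}_n \hookrightarrow \widetilde{\cC}^{\bullet}_n$ but then allow fibres of size two; combined with the split $|\widetilde{\cA}_n| = |\widetilde{\cC}_n| + |\widetilde{\mathcal D}_n|$ that would only yield $3\,|\widetilde{\cC}^{\bullet}_n|$. The clean bookkeeping is to encode \emph{every} $G \in \widetilde{\cA}_n$, connected or not, injectively as a pair $(H^v, b) \in \widetilde{\cC}^{\bullet}_n \times \{0,1\}$.

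The idea you are missing is to use a star rather than a path, with the attachment vertex chosen so that its local bridge structure is controlled in advance. Let $C$ be a smallest component of $G$; since $C$ has at most $v(C)-1$ bridges, by pigeonhole some vertex $v \in C$ is incident to $b \leq 1$ bridges of $G$. Add one edge from $v$ to a vertex of every other component, root the result $H$ at $v$, and record the bit $b$. Decoding is now a size comparison, with no canonical order needed: adding edges never creates new bridges inside $C$, so the bridges of $H$ at the root are exactly the added edges together with the $b \in \{0,1\}$ original ones; each added edge cuts off an entire other component of $G$, hence at least $v(C)$ vertices, while the possible original bridge at $v$ cuts off a proper part of $C \setminus \{v\}$, hence at most $v(C)-1$ vertices. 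So, knowing $b$, the added edges are exactly the root-bridges with the largest cut-off sizes (all but the strict minimum when $b=1$), and deleting them recovers $G$. Note also that the arbitrary choices (which smallest component, which eligible $v$, which attachment vertices) are harmless, because \emph{any} resulting pair $(H^v,b)$ decodes back to $G$, so distinct graphs have disjoint code sets. The minimality of $C$ and the pigeonhole choice of $v$ do in one stroke what your canonical order was meant to do, and the single bit $b$ is exactly where the factor $2$ comes from.
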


\begin{proof}
From each graph $G \in \widetilde{\cA}_n$ we shall construct a vertex-rooted connected graph $H^v \in \widetilde{\cC}^{\bullet}_n$ and $b \in \{0,1\}$, such that from $H^v$ and $b$ we can determine~$G$.
It will follow that $|\widetilde{\cA}_n | \leq 2\, |\widetilde{\cC}^{\bullet}_n |$, and since $|\widetilde{\cC}^{\bullet}_n| \leqslant n \, |\widetilde{\cC}_n|$ the proof will be complete.

Now for the construction.
Observe first that every $n$-vertex graph has at most $n-1$ bridges, so there is a vertex incident to at most one bridge.  
Let $G \in \widetilde{\cA}_n$. Let $C$ be a smallest component in $G$, and let $v$ be a vertex in $C$ incident to $b \in \{0, 1\}$ bridges.  Add an edge between $v$ and each component of $G$ other than $C$, forming the graph $H$.  Finally root $H$ at $v$, and %let $v$ be its root vertex.
output the rooted graph $H^v$ together with $b$.

Let $B$ be the set of bridges in $H$ incident to $v$. Then $B$ consists of $b$ edges from the original graph $G$ and any edges added when forming $H$.  
For each edge $e \in B$ let $n(e)$ be the number of vertices in the component of $H \backslash e$ not containing $v$. If $b=1$ and $e$ is the edge in $B$ from $G$ then $n(e) \leq v(C)-1$.  Also $n(e') \geq v(C)$ for any other edge $e' \in B$. Thus, given $H^v$ and $b$, we can identify these added edges and delete them to recover $G$, as required.
\end{proof}

\bigskip

%arXiv:2001.05256v1 (15 Jan 2020)

This note grew from an open problem presented at Oberwolfach in May 2014. 
\m{Barbados?}
 It was updated and presented as an open problem at the First Armenian Workshop on Graphs, Combinatorics, Probability, in Dzoraget, Armenia, in June 2019.  Now it has been amplified by adding Theorem~\ref{thm.uconn} (and some further references).
%and 3 refs

%five years without real progress for unlabelled graphs!


\begin{thebibliography}{14}

\bibitem{amr2012} 
L.~Addario-Berry, C. McDiarmid and B. Reed,
Connectivity for bridge-addable monotone graph classes,
{\em Combinatorics, Probability and Computing} {\bf 21} (2012) 803 -- 815.


\bibitem{bbg2008}
P.~Balister, B. Bollob{\'a}s and S. Gerke,
{Connectivity of addable graph classes}. {\em J. Combin. Th. B} {\bf 98} (2008) 577 -- 584.


\bibitem{bbg2010}
P.~Balister, B. Bollob{\'a}s and S. Gerke,
Connectivity of random addable graphs, {\em Proc. ICDM 2008} No 13 (2010) 127 -- 134.

  
\bibitem{cp2019}
G.~Chapuy and G.~Perarnau,
Connectivity in bridge-addable graph classes: the McDiarmid-Steger-Welsh conjecture,
{\em Journal of Combinatorial Theory} B {\bf 136} (2019) 44 -- 71.

\bibitem{cp2020}
G.~Chapuy and G.~Perarnau,
Local convergence and stability of tight bridge-addable graph classes,
{\em Canadian Journal of Mathematics}, to appear. arXiv:1609.03974v1

\bibitem{km}
M.~Kang and C.~McDiarmid,
Random unlabelled graphs containing few disjoint cycles,
{\em Random Structures and Algorithms} {\bf 38} (2011) 174 -- 204.


\bibitem{kp2013}
M.~Kang and K.~Panagiotou,
On the connectivity of random graphs from addable classes,
{\em J. Combinatorial Theory B} {\bf 103} (2013) 306 -- 312.


\bibitem{rgmc}
C.~McDiarmid, Random graphs from a minor-closed class,
 {\em Combinatorics, Probability and Computing} {\bf 18} (2009) 583-599.


\bibitem{cmcd2013}
C.~McDiarmid,
Connectivity for random graphs from a weighted bridge-addable class,
{\em Electronic J Combinatorics} {\bf 19}(4) (2012) P53.


\bibitem{cmcd2016}  %\m{added}
C.~McDiarmid,
Connectivity for bridge-alterable graph classes,
{\em European J. Comb.} {\bf 56} (2016) 33 -- 39.


\bibitem{ms2020}
C.~McDiarmid and S.~Saller,
Classes of graphs embeddable in non-constant surfaces,
in preparation, 2020.


\bibitem{msw05}
C.~McDiarmid, A.~Steger and D.~Welsh,
Random planar graphs,
{\em J. Combinatorial Theory B} {\bf 93} (2005) 187 -- 206.


\bibitem{msw06}
C.~McDiarmid, A.~Steger and D.~Welsh,
Random graphs from planar and other addable classes,
{\em Topics in Discrete Mathematics} (M. Klazar, J. Kratochvil, M. Loebl, J. Matousek, R. Thomas, P. Valtr, Eds.), Algorithms and Combinatorics 26, Springer, 2006, 231 -- 246.
  
  
\bibitem{mw2014} %\m{added}
C.~McDiarmid and K.~Weller, 
Relatively bridge-addable classes of graphs,
in LATIN 2014 : Theoretical Informatics,
Proceedings of the 11th Latin American Symposium, Montevideo, Uruguay, % March 31 - April 4 
 2014 (editors Alberto Pardo and Alfredo Viola), Springer \emph{LNCS} {\bf 8392} (2014) 391 -- 398.
% (Joint with Kerstin Weller.)  
  
   
\bibitem{mw2017} %\m{added}
C.~McDiarmid and K.~Weller,
Bridge-addability, edge-expansion and connectivity,
{\em Combinatorics, Probability and Computing} {\bf 26} (2017) 697 -- 719. 
%Published online 11 May 2017.
% was : Connectivity for relatively bridge-addable classes of graphs. (Joint with Kerstin Weller.)


\bibitem{norin2013}
  S.~Norin, Connectivity of addable classes of forests, private communication, 2013.


\bibitem{renyi59}
   A.~R\'enyi,
   Some remarks on the theory of trees,
   {\em Publications of the Mathematical Institute of the
   Hungarian Academy of Sciences} {\bf 4} (1959) 73 -- 85.

\end{thebibliography}
\end{document}